\documentclass[11pt, letterpaper]{article}
\usepackage{fullpage}

\usepackage{amsmath,amsthm,amssymb,amsfonts}
\usepackage{mathtools}
\usepackage{mathrsfs}
\usepackage[shortlabels]{enumitem}
\usepackage{thmtools}

\usepackage{float}
\usepackage[hidelinks]{hyperref}
\usepackage[nameinlink,capitalise,noabbrev]{cleveref}
\crefformat{equation}{(#2#1#3)}

\usepackage{xcolor}
\usepackage{graphicx}
\usepackage{tikz}
\usetikzlibrary{calc}

\theoremstyle{definition}
\newtheorem{definition}{Definition}

\theoremstyle{plain}

\newtheorem{theorem}[definition]{Theorem}

\newtheorem{claim}[definition]{Claim}

\crefname{claim}{Claim}{Claims}
\crefname{lemma}{Lemma}{Lemmas}

\def \sm {\setminus}
\def \ce {\coloneqq}

\renewcommand{\le}{\leqslant}
\renewcommand{\ge}{\geqslant}

\def \es {\varnothing}
\renewcommand \b[2] {\binom{#1}{#2}}

\def \F{\mathcal{F}}
\def \mH{\mathcal{H}}
\def \bF {\bar{\F}}

\title{On the minimum size of maximal $k$-wise intersecting families}

\author{Haoran Luo\thanks{Department of Mathematics, Statistics and Computer Science, University of Illinois Chicago, Chicago, Illinois 60607, USA. The author was partially supported by an AMS-Simons Travel Grant.
Email: \texttt{haoranl8@uic.edu}.
}}

\date{}

\begin{document}
\maketitle

\begin{abstract}
    A family $\mathcal{F}$ of subsets of $[n] \coloneqq \{1,2,\ldots, n\}$ is called maximal $k$-wise intersecting if every collection of at most $k$ members of $\mathcal{F}$ has a non-empty intersection, and adding any other set to $\mathcal{F}$ breaks this property.
    An old question by Erd\H{o}s and Kleitman from 1974 asks for the minimum size of a maximal $k$-wise intersecting family.
    The case $k = 3$ is known for all sufficiently large $n$, but the problem remains open for all $k \geqslant 4$.
    The previous best-known upper bound is by Janzer, which has a leading term $(k-1)2^{k-3}2^{n/(k-1)}$ for sufficiently large $n$ divisible by $k-1$. In this note, we improve this bound to $(4k-10)2^{n/(k-1)}$, which reduces the dependence on $k$ in the leading coefficient from exponential to linear and is within a factor of $4$ of the known lower bound. 
\end{abstract}

\section{Introduction} \label{sec::Int}
The intersection property lies in a central position in the study of extremal set theory.
For integers $n, k \ge 2$, a family $\F$ of subsets of $[n] \ce \{1,2,\ldots, n\}$ is \emph{$k$-wise intersecting} if every collection of at most $k$ members in $\F$ has a non-empty intersection. A well-known fact is that every ($2$-wise) intersecting family has size at most $2^{n-1}$, which is achieved by the family of sets containing a fixed element, 
and hence, every $k$-wise intersecting family also has the same upper bound of size. Intersection problems for families of sets have been studied extensively, see Frankl and Tokushige~\cite{frankl2016invitation} for a survey.

A family $\F$ of subsets of $[n]$ is \emph{maximal $k$-wise intersecting} if it is $k$-wise intersecting and adding any other set to $\F$ breaks this property.
Let $f_k(n)$ be the minimum possible size of a maximal $k$-wise intersecting family on the ground set $[n]$.
An old question by Erd\H{o}s and Kleitman~\cite{erdos1974extremal} from 1974 asks to determine $f_k(n)$.
The fact $f_2(n) = 2^{n-1}$ follows from the simple observation that every maximal intersecting family has size exactly $2^{n-1}$.
For the first non-trivial case $k=3$, Balogh, Chen, Hendrey, Lund, Luo, Tompkins, and Tran~\cite{balogh2023maximal} solved it completely for sufficiently large $n$. They proved that the unique extremal family is obtained by partitioning the ground set into two sets $A, B$ whose sizes differ by at most one and then taking the family of all proper supersets of $A$ and of $B$, so $f_3(n) = 2^{\lceil n /2 \rceil} + 2^{\lfloor n /2 \rfloor} - 3$.

However, for the cases $k \ge 4$, Janzer~\cite{janzer2022saturation} showed that the direct generalization of the construction mentioned above cannot be maximal $k$-wise intersecting. Using this result, Balogh, Chen, Hendrey, Lund, Luo, Tompkins, and Tran~\cite{balogh2023maximal} proved that for every $k \ge 4$, there is $c_k > 0$ such that for all sufficiently large $n$ divisible by $k-1$,
\begin{equation} \label{equ::lowerboundfkn}
f_k(n) \ge (1+c_k)\big( (k-1)2^{\frac{n}{k-1}} -k + 2 \big).
\end{equation}
For the upper bound, Janzer~\cite{janzer2022saturation} improved the previous bound $O_k(2^{n / \lceil k/2 \rceil})$ from~\cite{balogh2023maximal} and proved that for all $n \ge 2(k-1)$ divisible by $k-1$,
\[
f_k(n) \le (k-1)2^{k-3} \cdot 2^{\frac{n}{k-1}} - (2^{k-1}-1)(k-2).
\]

In this paper, we obtain the following upper bound on $f_k(n)$.

\begin{theorem} \label{thm::main}
    For integers $k \ge 4$ and $n \ge (k-1)(k-2)$ divisible by $k-1$, we have
    \[
        f_k(n) \le (4k-10)2^{\frac{n}{k-1}} - 7(k-2).
    \]
\end{theorem}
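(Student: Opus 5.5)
The plan is to give an explicit construction and verify three things: it is $k$-wise intersecting, it is maximal, and it has the stated size. Partition $[n]$ into $k-1$ blocks $A_1,\dots,A_{k-1}$ of size $m \ce n/(k-1)$. The baseline family is the one of Janzer and of~\cite{balogh2023maximal}: all sets $A_i\cup \{x\}$ and their supersets, where $[n]\sm A_i$ determines things only through which block is missed. This baseline contributes roughly $(k-1)2^{m}$ sets. The idea is that every $k-1$ sets of the form "superset of $[n]\sm A_i$" (complements of a block) may fail to intersect. So we take the family $\F$ consisting of:
\begin{itemize}
\item the sets $[n]\sm S$ with $S\subsetneq A_i$ for some $i$, that is, complements of proper subsets of a single block, giving about $(k-1)2^m$ sets;
\item a small number of additional "gadget" families, each of size $O(2^m)$, which repair the intersection property and force maximality.
\end{itemize}

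The key step is to choose the gadgets so that the total coefficient becomes $4k-10$ rather than $(k-1)2^{k-3}$. My concrete attempt is to break the symmetry among the blocks. Fix a special element $x_i\in A_i$ in each block. We keep the complements of proper subsets of $A_i$ only for $i\ge 3$. For the pair $A_1,A_2$ we use a structure like the $k=3$ extremal family (proper supersets of one of two halves), nested inside the remaining blocks. The counting target is $2\cdot 2^m$ per ordinary block, giving $2(k-3)$ blocks' worth, plus $2\cdot 2^m$ for each of a couple of special blocks. This should sum to $(4k-10)2^m$ minus corrections of order $7(k-2)$ from excluded boundary sets. I expect these to be sets of the form $[n]\sm S$ and $[n]\sm (S\sm x_i)$, counted with the $x_i$ either present or absent, with the constant $-7(k-2)$ arising from discarding the empty/full cases.

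The verification then proceeds in three steps.
\begin{enumerate}
\item \textbf{$k$-wise intersecting.} Any $k$ members of $\F$ each miss at most one block partially, together with at most a controlled extra amount. By pigeonhole, some block is hit by only one "gadget" set, so some element survives. Here the $x_i$ serve as witnesses.
\item \textbf{Maximality.} For any $G\notin\F$, exhibit $k-1$ members of $\F$ whose intersection is disjoint from $G$. Split into cases by how many blocks $G$ fails to contain fully. If $G$ misses elements from at least two blocks, pick complements of those elements' blocks minus the missed points. If $G$ meets only one block partially, use the gadget sets to kill the remaining elements.
\item \textbf{Counting.} Sum the sizes of the components.
\end{enumerate}
The condition $n\ge (k-1)(k-2)$, that is $m\ge k-2$, is used to ensure that each block has enough room to host up to $k-2$ distinct witness elements in the maximality argument.

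The main obstacle is the maximality check for sets $G$ that contain all but a small part of a single block. This is exactly where Janzer showed the naive construction fails, and where gadgets are needed. My first try will be to make each block carry a "two-level" gadget, namely sets missing $S\subseteq A_i$ together with one prescribed point of the next block $A_{i+1}$ (cyclically). Each such gadget costs about $2\cdot 2^m$ per block, and linear chaining should replace the exponential $2^{k-3}$ blowup. I would then carefully adjust the two endpoint blocks so as to save $2\cdot 2^m$ and reach $4k-10$.
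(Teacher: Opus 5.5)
\textbf{There is a genuine gap: you never define the family.} Phrases such as ``a small number of additional gadget families,'' ``a structure like the $k=3$ extremal family nested inside the remaining blocks,'' and the cyclic ``two-level gadget'' are placeholders. Because of this, the pigeonhole sentence in your step~1 and the case split in your step~2 have nothing concrete to act on. Neither the $k$-wise property nor maximality can be checked, and maximality is exactly where Janzer showed symmetric constructions break.

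The counting sketch does not hold together either. As written, $2\cdot 2^m$ for each of $k-3$ ordinary blocks plus $2\cdot 2^m$ for two special blocks gives $(2k-2)2^m$, which differs from $(4k-10)2^m$ for $k\ge 5$. The constant $-7(k-2)$ is only guessed. You also misidentify the role of the hypothesis $n\ge (k-1)(k-2)$: it is not about hosting witnesses inside blocks. In the paper it is needed only so that the shortened block has positive size $n/(k-1)-k+3\ge 1$.

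\textbf{The missing idea.} Do not cut $[n]$ into $k-1$ equal blocks. The paper reserves $k-1$ connector elements outside the blocks, writing $[n]=X_1\cup\dots\cup X_{k-1}\cup Y\cup\{z\}$ with $Y=\{y_1,\dots,y_{k-2}\}$. The complement family $\bar{\mathcal F}$ is built as follows:
\begin{itemize}
\item for $i\le k-2$, all $A\cup B$ with $A\subsetneq X_i$ and $B\subseteq\{y_i,z\}$, together with $X_i$ itself;
\item for the last block, all $A\cup B$ with $A\subsetneq X_{k-1}$ and $B\subseteq Y$, together with $X_{k-1}\cup B$ for $|B|\le k-4$.
\end{itemize}

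\textbf{Why this is $k$-wise intersecting.} Suppose $k$ such sets covered $[n]$. Each block would need its own set, plus one extra set doubling some block $t$. Every other block is covered by a single set, which must be the full block. These full blocks contain no $z$ and, in total, at most $k-4$ elements of $Y$. If $t\le k-2$, the doubled pair adds at most $y_t$, so $Y$ is not covered. If $t=k-1$, no chosen set contains $z$. Maximality is then checked on six types of inclusion-minimal non-members.

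\textbf{Why the count is linear in $k$.} The factor $2^{k-2}$ appears only on $X_{k-1}$, and it is absorbed by taking $|X_{k-1}|=n/(k-1)-k+3$. Take $|X_{k-3}|=|X_{k-2}|=n/(k-1)-1$ and all other blocks of size $n/(k-1)$. Then the sizes sum to $n$ and the leading coefficient is $4(k-4)+2+2+2=4k-10$. Inclusion--exclusion over the overlaps $\{\emptyset,\{z\}\}$ and $\{\emptyset,\{y_1\},\dots,\{y_{k-2}\}\}$ yields the $-7(k-2)$.
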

\noindent
Hence, the dependence on $k$ in the leading coefficient is reduced from exponential to linear, and we are within a factor of $4$ of \cref{equ::lowerboundfkn}.
We remark that it agrees with Janzer’s bound for $k = 4$ and improves it for every $k \ge 5$.

\cref{thm::main} follows from the following general theorem, which works for all $n \ge 2k-2$.

\begin{theorem} \label{thm::mainGeneral}
Let $k\ge 4$ be a fixed integer. Assume that $n, m_1, m_2, \ldots, m_{k-1}$ are positive integers with
\[
    n = (k-1) + \sum_{i=1}^{k-1} m_i.
\]
Then, we have
\[
    f_k(n) \le 4 \sum_{i=1}^{k-2} 2^{m_i} + 2^{k-2}2^{m_{k-1}} - 7(k-2).
\]
\end{theorem}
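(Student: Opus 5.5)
We need an explicit construction. The ground set $[n]$ splits into $k-1$ special points $x_1,\dots,x_{k-1}$ and blocks $A_1,\dots,A_{k-1}$ with $|A_i|=m_i$. The count $4\sum_{i\le k-2}2^{m_i} + 2^{k-2}2^{m_{k-1}}$ suggests the following shape. For each $i\le k-2$, the family contains about four "types" of sets built from $A_i$: all subsets $S\subseteq A_i$ combined with a bounded amount of extra structure. For block $k-1$, there are $2^{k-2}$ types: subsets of $A_{k-1}$ combined with arbitrary subsets of $\{x_1,\dots,x_{k-2}\}$. The correction term $-7(k-2)$ comes from discarding degenerate members, such as those with $S=\es$ or $S=A_i$, that would otherwise violate $k$-wise intersection or be redundant.

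Concretely, I would model the family on Janzer's construction. Janzer's family is a generalization of the $k=3$ family $\{X\supsetneq A\}\cup\{X\supsetneq B\}$. In its base form, take $B_i = ([n]\setminus (A_i\cup\{x_i\}))$, let $\F_i$ consist of sets of the form $B_i\cup S$ with $S\subseteq A_i\cup\{x_i\}$, and keep only those "complement-large" sets that are needed. Any $k-1$ such sets from distinct blocks $i$ miss at most the union of the portions they omit, so they still meet.

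1. Write down the family $\F$ explicitly as a union $\F_1\cup\dots\cup\F_{k-1}$. Each $\F_i$ ($i\le k-2$) has four members per subset of $A_i$, indexed by intersection patterns with $\{x_i,x_{k-1}\}$. $\F_{k-1}$ is indexed by subsets of $A_{k-1}$ together with arbitrary patterns on $x_1,\dots,x_{k-2}$.
2. Verify that $\F$ is $k$-wise intersecting by a case analysis on how many of the $k$ chosen sets come from each $\F_i$. Two sets from the same $\F_i$ typically meet in a large common part outside $A_i$. Sets from pairwise different $\F_i$ need at most $k-1$ indices, but we choose $k$ sets, so pigeonhole forces a repeat, and that repeat is where the $x$-points are used.
3. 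Verify maximality. For any $Y\notin\F$, exhibit $k-1$ members of $\F$ whose intersection with $Y$ is empty. Sets not containing a "full" complement should be killed by picking, for each block, a member that avoids $Y\cap A_i$ (for example $S=A_i\setminus Y$). Separately handle $Y$ that miss some $x$-points, using the $2^{k-2}$ patterns in $\F_{k-1}$.
4. Count: sum the sizes and subtract the excluded degenerate sets, checking that the total is $4\sum 2^{m_i}+2^{k-2}2^{m_{k-1}}-7(k-2)$.

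The main obstacle is step 3, maximality. We must ensure that every set outside $\F$ (including subsets of members and sets differing only on the special points) is blocked by at most $k-1$ members. At the same time, step 2 must not fail. The delicate balance is choosing exactly which patterns on the special points to include. I would first try to determine those patterns by requiring that, for the $k=4$ case, the family coincides with Janzer's, and then generalize index by index. The degenerate exclusions (seven per block) are then found by checking which sets break step 2.
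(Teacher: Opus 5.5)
\textbf{There is a genuine gap: the family is never specified.} Your proposal finds roughly the right skeleton. Work, as the paper does, with complements, where the condition is that no $k$ members cover $[n]$, and write $x_1,\dots,x_{k-2}$ for the paper's $y_1,\dots,y_{k-2}$ and $x_{k-1}$ for its $z$. Then for $i\le k-2$ the members are supported on $A_i\cup\{x_i,x_{k-1}\}$, and for the last block on $A_{k-1}\cup\{x_1,\dots,x_{k-2}\}$.

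But you never say which sets are in the family, and that choice is the whole proof. Steps 2--4 have nothing to verify until it is made. The decisive question is which members containing a \emph{full} block are kept. The paper's choice is as follows:
\begin{itemize}
\item for $i\le k-2$, all $S\cup P$ with $S\subsetneq A_i$ and $P\subseteq\{x_i,x_{k-1}\}$, plus only the bare set $A_i$;
\item for the last block, all $S\cup P$ with $S\subsetneq A_{k-1}$ and $P\subseteq\{x_1,\dots,x_{k-2}\}$, plus $A_{k-1}\cup P$ only when $|P|\le k-4$.
\end{itemize}

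The $k$-wise property then rests on a mechanism you do not identify. If $k$ members covered $[n]$, each block would need its own representative, since every member meets at most one block. So the $k$-th set repeats some family $t$, and every other block is covered by a full-block member. If $t\le k-2$, the points $x_1,\dots,x_{k-2}$ receive at most $(k-4)+1<k-2$ elements. If $t=k-1$, the point $x_{k-1}$ is missed.

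These restrictions are forced from both sides:
\begin{itemize}
\item Allowing $|P|=k-3$ produces $k$ covering members, e.g.\ $A_{k-1}\cup(\{x_1,\dots,x_{k-2}\}\setminus\{x_t\})$, $A_\ell$ for $\ell\ne t$, $(A_t\setminus\{a\})\cup\{x_t,x_{k-1}\}$, and $\{a\}$.
\item Maximality for non-members such as $\{x_i,x_j,x_{k-1}\}$, or $\{a,b\}$ with $a\in A_i$, $b\in A_j$, requires $A_{k-1}\cup(\{x_1,\dots,x_{k-2}\}\setminus\{x_i,x_j\})$ to be present.
\item Allowing $A_i\cup\{x_i\}$ or $A_i\cup\{x_{k-1}\}$ for some $i\le k-2$ immediately yields $k$ covering members.
\end{itemize}

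\textbf{Your methods for pinning these choices down do not determine them.} Matching Janzer at $k=4$ and generalizing block by block leads naturally back to his symmetric construction. Its coefficient $(k-1)2^{k-3}$ is exactly what the theorem improves; the new idea is the asymmetry between the last block and the others. ``Discard whatever breaks step 2'' is not a well-defined procedure either. Many different deletions restore $k$-wise intersection, and most of them destroy maximality or change the count.

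\textbf{Your account of the constant is inaccurate.} Only $3$ sets are discarded per block $i\le k-2$, and $k-1$ in the last block. The remaining $3k-7$ of $7(k-2)$ comes from sets counted in several families. The sets $\emptyset$ and $\{x_{k-1}\}$ lie in each of the first $k-2$ families, and $\emptyset,\{x_i\}$ lie in both family $i$ and the last family. So the correction is neither ``discarding members with $S=\emptyset$'' nor ``seven per block''.

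\textbf{Your concrete Janzer-style sketch also does not work.} With $B_i=[n]\setminus(A_i\cup\{x_i\})$ it gives only two patterns per subset of $A_i$, which contradicts your step 1. Its claim that any $k-1$ members from distinct blocks still meet is false: take $S=\emptyset$ in every block, and $\bigcap_i B_i=\emptyset$. In any case, the property to check concerns $k$ sets, where a repeated block is unavoidable.
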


The remaining part of this paper is organized as follows. In \cref{sec::Con}, we give the proof of \cref{thm::mainGeneral} and deduce \cref{thm::main}. In \cref{sec::concluding}, we give some concluding remarks.

\section{Construction} 
\label{sec::Con}
We first prove \cref{thm::mainGeneral} and then deduce \cref{thm::main}.

\begin{proof}[Proof of \cref{thm::mainGeneral}]
    Note that a family $\F$ on the ground set $[n]$ is $k$-wise intersecting if and only if $\bF \ce \{[n] \sm S : S \in \F\}$ has the property that the union of any collection of at most $k$ members from $\bF$ does not equal $[n]$. We will give the construction for $\bF$ and then $\F$ can be recovered naturally.

    We partition the ground set $[n]$ into
    \[
        [n] = X_1 \cup \ldots \cup X_{k-2} \cup X_{k-1} \cup Y \cup \{z\},
    \]
    where 
    \[
        |X_i| = m_i \,\,\, \textrm{for every $1 \le i \le k-1$} \quad \textrm{and} \quad Y = \{y_1, \ldots, y_{k-2}\}.  
    \]
    For every integer $i$ $(1 \le i \le k-2)$, define
    \[
        \mH_i \ce \{A \cup B : A \subsetneq X_i,\, B \subseteq \{y_i, z\} \} \cup \{X_i\},
    \]
    and for $k-1$, define
    \[
        \mH_{k-1} \ce \{A \cup B : A \subsetneq X_{k-1},\, B \subseteq Y\} \cup \{X_{k-1} \cup B : B \subseteq Y,\, |B| \le k-4\}.
    \]
    Let 
    \[
        \bF \ce \bigcup_{i=1}^{k-1} \mH_i.
    \]
    See \cref{fig::construction_k5} for an illustration of the case $k = 5$.

    \begin{figure}[ht]
    \centering
    \begin{tikzpicture}[
        x=1.35cm,
        y=1.08cm,
        line cap=round,
        line join=round,
        every node/.style={font=\small}
    ]
    
    \coordinate (X1) at (0,0);
    \coordinate (X2) at (1.60,0);
    \coordinate (X3) at (3.20,0);
    \coordinate (X4) at (4.80,0);
    \coordinate (Y1) at (6.15,0);
    \coordinate (Y2) at (6.90,0);
    \coordinate (Y3) at (7.65,0);
    \coordinate (Z)  at (8.75,0);
    
    \def\ra{-1.30}
    \def\rb{-2.45}
    \def\rc{-3.60}
    \def\rd{-4.75}
    \def\rdb{-5.35}
    
    \def\lga{-6.35}
    \def\lgb{-6.90}
    
    \newcommand{\topblock}[2]{%
        \draw[line width=0.7pt]
            ($(#1)+(-0.42,0.38)$) --
            ($(#1)+(-0.42,0.14)$) --
            ($(#1)+(0.42,0.14)$) --
            ($(#1)+(0.42,0.38)$);
        \node[above=2pt] at ($(#1)+(0,0.38)$) {$#2$};
    }
    
    \newcommand{\fullblock}[2]{%
        \draw[line width=0.7pt]
            ($(#1)+(0,#2)+(-0.42,0.15)$) --
            ($(#1)+(0,#2)+(-0.42,-0.10)$) --
            ($(#1)+(0,#2)+(0.42,-0.10)$) --
            ($(#1)+(0,#2)+(0.42,0.15)$);
    }
    
    \newcommand{\shortsubset}[2]{%
        \draw[line width=1pt]
            ($(#1)+(0,#2)+(-0.22,0)$) --
            ($(#1)+(0,#2)+(0.22,0)$);
    }
    
    \newcommand{\dotat}[2]{%
        \fill ($(#1)+(0,#2)$) circle (1.5pt);
    }
    
    \topblock{X1}{X_1}
    \topblock{X2}{X_2}
    \topblock{X3}{X_3}
    \topblock{X4}{X_4}
    
    \node[above] at ($(Y1)+(0,0.30)$) {$y_1$};
    \node[above] at ($(Y2)+(0,0.30)$) {$y_2$};
    \node[above] at ($(Y3)+(0,0.30)$) {$y_3$};
    \node[above] at ($(Z)+(0,0.30)$) {$z$};
    
    \node[left] at (-0.95,\ra) {$\mH_1$};
    \node[left] at (-0.95,\rb) {$\mH_2$};
    \node[left] at (-0.95,\rc) {$\mH_3$};
    \node[left] at (-0.95,\rd) {$\mH_4$};
    
    \shortsubset{X1}{\ra}
    \dotat{Y1}{\ra}
    \dotat{Z}{\ra}
    \node[anchor=west] at (9.45,\ra) {or $X_1$};
    
    \shortsubset{X2}{\rb}
    \dotat{Y2}{\rb}
    \dotat{Z}{\rb}
    \node[anchor=west] at (9.45,\rb) {or $X_2$};
    
    \shortsubset{X3}{\rc}
    \dotat{Y3}{\rc}
    \dotat{Z}{\rc}
    \node[anchor=west] at (9.45,\rc) {or $X_3$};
    
    \shortsubset{X4}{\rd}
    \dotat{Y1}{\rd}
    \dotat{Y2}{\rd}
    \dotat{Y3}{\rd}
    
    \node[anchor=east] at ($(X4)+(0,\rdb)+(-0.55,0)$) {or};
    \fullblock{X4}{\rdb}
    \node at ($(Y2)+(0,\rdb)$) {$\le$  singleton};
    
    \draw[line width=1pt] (-0.15,\lga) -- (0.28,\lga);
    \node[anchor=west] at (0.48,\lga)
        {an arbitrary proper subset of the corresponding $X_i$};
    
    \fill (0.06,\lgb) circle (1.5pt);
    \node[anchor=west] at (0.48,\lgb)
        {the corresponding element may or may not be included};
    
    \end{tikzpicture}
    
    \caption{An illustration of the construction for $k=5$.
    For $i=1,2,3$, the family $\mH_i$ consists of all sets of the
    form $A\cup B$ with $A\subsetneq X_i$ and
    $B\subseteq\{y_i,z\}$, together with the set $X_i$.
    The family $\mH_4$ consists of all sets of the form $A\cup B$
    with $A\subsetneq X_4$ and
    $B\subseteq Y=\{y_1,y_2,y_3\}$, together with the sets
    $X_4\cup B$ where $B$ is either empty or a singleton.}
    \label{fig::construction_k5}
    \end{figure}

    \begin{claim} \label{cla::sizebF}
        We have
        \[
            |\bF| = 4 \sum_{i=1}^{k-2} 2^{m_i} + 2^{k-2}2^{m_{k-1}} - 7(k-2).
        \]
    \end{claim}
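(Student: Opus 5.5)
The plan is to compute $|\bF|$ by inclusion--exclusion: first count each $\mH_i$ separately, then determine exactly which sets lie in more than one $\mH_i$, and correct for the overcount.

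\emph{Sizes of the individual families.} For $1 \le i \le k-2$, a set $A \cup B$ with $A \subsetneq X_i$ and $B \subseteq \{y_i,z\}$ determines $A$ and $B$ uniquely, because $X_i$ and $\{y_i,z\}$ are disjoint. This gives $(2^{m_i}-1)\cdot 4$ sets. The set $X_i$ is not among them, since $A \ne X_i$. Hence $|\mH_i| = 4\cdot 2^{m_i} - 3$.

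For $\mH_{k-1}$, the first part has $(2^{m_{k-1}}-1)2^{k-2}$ sets. The second part has $\sum_{j=0}^{k-4}\binom{k-2}{j} = 2^{k-2} - (k-2) - 1$ sets. The two parts are disjoint, because a set in the second part contains all of $X_{k-1}$ while a set in the first part does not. Therefore $|\mH_{k-1}| = 2^{k-2}2^{m_{k-1}} - (k-1)$.

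\emph{Overlaps.} A set lying in both $\mH_i$ and $\mH_j$ with $i \ne j$ is contained in the intersection of their supports. The supports are $X_i \cup \{y_i,z\}$ for $i \le k-2$ and $X_{k-1} \cup Y$. The sets $X_1,\dots,X_{k-1},Y,\{z\}$ are pairwise disjoint. So for $i,j \le k-2$ the intersection of supports is $\{z\}$, and for $i \le k-2$, $j = k-1$ it is $\{y_i\}$.

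A common set therefore contains no element of any $X_\ell$. This is the step that needs care: the full sets $X_i$, and sets of the form $X_{k-1} \cup B$, contain such elements and so cannot be shared. Since every $m_i \ge 1$, the empty set is a proper subset of each $X_i$, and we get:
\begin{itemize}
\item $\es$ lies in all $k-1$ families;
\item $\{z\}$ lies in exactly $\mH_1,\dots,\mH_{k-2}$;
\item each $\{y_i\}$ lies in exactly $\mH_i$ and $\mH_{k-1}$;
\item no other set is shared.
\end{itemize}

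\emph{Summation.} Subtracting the overcounts $(k-2)$ for $\es$, $(k-3)$ for $\{z\}$ and $(k-2)$ for the sets $\{y_i\}$ from the sum of the sizes gives
\[
|\bF| = 4\sum_{i=1}^{k-2}2^{m_i} + 2^{k-2}2^{m_{k-1}} - \bigl(3(k-2) + (k-1) + (k-2) + (k-3) + (k-2)\bigr).
\]
The bracket equals $7k - 14 = 7(k-2)$, which is the claimed value. The only real obstacle is making sure the overlap list is complete; the rest is direct arithmetic.
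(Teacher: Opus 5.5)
Your proof is correct and follows essentially the same route as the paper. Both arguments compute $|\mH_i| = 4\cdot 2^{m_i}-3$ and $|\mH_{k-1}| = 2^{k-2}2^{m_{k-1}}-(k-1)$ and then correct for the shared sets $\es$, $\{z\}$, $\{y_i\}$; the only difference is that you subtract per-set multiplicities in one step, whereas the paper first merges $\mH_1,\dots,\mH_{k-2}$ (whose pairwise intersections are $\{\es,\{z\}\}$) and then removes the $k-1$ sets they share with $\mH_{k-1}$.
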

    \begin{proof}
        For every integer $i$ $(1\le i \le k-2)$, we have
        \[
            |\mH_i| =    2^2 \cdot (2^{m_i}-1)+1 = 4 \cdot 2^{m_i} - 3.
        \]
        For integers $i,j$ with $1 \le i < j \le k-2$, we have $\mH_i \cap \mH_j = \{\es, \{z\}\}$. Hence,
        \[
            \Big|\bigcup_{i=1}^{k-2}\mH_i\Big|
            = 
            2 + \sum_{i=1}^{k-2}\big(|\mH_i|-2\big)
            =
            2+\sum_{i=1}^{k-2}(4\cdot2^{m_i}-5)
            =
            4\sum_{i=1}^{k-2}2^{m_i}-5k + 12.
        \]

        For $\mH_{k-1}$, we have
        \begin{align*}
            |\mH_{k-1}| = (2^{m_{k-1}}-1)2^{k-2} + \sum_{j=0}^{k-4}\b{k-2}{j}
            &=
            (2^{m_{k-1}}-1)2^{k-2} + 2^{k-2} - (k-2) - 1 \\
            &= 2^{k-2} 2^{m_{k-1}} - k+1.
        \end{align*}
        Furthermore,
        \[
        \Big(\bigcup_{i=1}^{k-2}\mH_i\Big)
        \cap\mH_{k-1}
        =
        \{\es,\{y_1\},\ldots,\{y_{k-2}\}\},
        \]
        which has size $k-1$. Therefore, we have
        \begin{align*}
            |\bF| =\Big|\bigcup_{i=1}^{k-2}\mH_i\Big|+ |\mH_{k-1}| -(k-1)
            &=
            4\sum_{i=1}^{k-2}2^{m_i}-5k + 12 + \big(2^{k-2} 2^{m_{k-1}} - k+1\big) - (k-1) \\
            &=
            4\sum_{i=1}^{k-2}2^{m_i} + 2^{k-2}2^{m_{k-1}} -7(k-2). \qedhere
        \end{align*}
    \end{proof}

    \begin{claim} \label{cla::kwiseInt}
        For any sets $S_1, \ldots, S_k \in \bF$, we have 
        \[
            \bigcup_{i=1}^k S_i \neq [n].
        \]
    \end{claim}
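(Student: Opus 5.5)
We argue by contradiction: suppose $S_1,\ldots,S_k\in\bF$ satisfy $\bigcup_{i=1}^k S_i=[n]$. The first step is a covering count on the blocks. For every $1\le i\le k-1$, the only members of $\bF$ that contain all of $X_i$ are the sets of the form $X_i\cup B$, and these lie in $\mH_i$. A set $A\cup B$ with $A\subsetneq X_i$ misses some element of $X_i$, and every other member of $\bF$ meets $X_i$ only inside a proper subset or not at all. So the first thing to establish is that no union of sets each missing part of $X_i$ can cover $X_i$ cheaply. Concretely, each member of $\bF$ meets at most one block $X_j$. Hence for each block we need either one set containing $X_i$, or at least two sets meeting $X_i$, and in every case at least one set devoted to $X_i$. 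That gives at least $k-1$ of the $S_j$ assigned to distinct blocks, with at most one spare set.

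Next we do a case analysis on how the spare set is used.

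\emph{Case 1: every block is covered by a single set containing it.} Then $k-1$ of the sets are $X_1,\ldots,X_{k-2}$ (the only members of $\mH_i$ containing $X_i$ for $i\le k-2$) together with $X_{k-1}\cup B$ with $|B|\le k-4$. These cover no element of $\{z\}$ and at most $k-4$ elements of $Y$. The one remaining set must then cover $z$ together with at least two elements of $Y$. But a set containing $z$ lies in some $\mH_i$ with $i\le k-2$, and so contains at most one element $y_i$ of $Y$. This is a contradiction.

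\emph{Case 2: some block $X_i$ is covered by two sets, both meeting $X_i$.} Then all $k$ sets are used up, and every other block $X_j$ is covered by exactly one set containing it.
\begin{itemize}
\item If $i\le k-2$: the sets for the other blocks $j\le k-2$ are $X_j$, which contribute nothing to $Y\cup\{z\}$. The set for $X_{k-1}$ contributes at most $k-4$ elements of $Y$. The two sets for $X_i$ contribute only $y_i$ and $z$ from $Y$. Altogether this covers at most $k-3<k-2$ elements of $Y$, a contradiction.
\item If $i=k-1$: the other sets are exactly $X_1,\ldots,X_{k-2}$. The two sets for $X_{k-1}$ come from $\mH_{k-1}$ and avoid $z$. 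So $z$ is uncovered, a contradiction.
\end{itemize}

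The main care needed is the initial counting claim. It must account for sets such as $\es$, $\{z\}$ or subsets of $Y$ that meet no block, and such sets only reduce the budget further. It also relies on the fact that a single set of the form $A\cup B$ with $A\subsetneq X_i$ never covers $X_i$. Both points follow directly from the definitions of the $\mH_i$.
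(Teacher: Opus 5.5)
Your proof is correct and follows essentially the paper's argument. Since each member of $\bF$ meets at most one block, $k-1$ of the sets are pinned to distinct blocks with one spare, and the contradiction comes either from covering at most $k-3$ elements of $Y$ or from leaving $z$ uncovered. The only difference is cosmetic: you split cases by whether every block lies inside a single set, whereas the paper relabels so that $S_i\in\mH_i$ and splits by which $\mH_t$ contains $S_k$.
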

    \begin{proof}
        Suppose for a contradiction that there are sets $S_1, \ldots, S_k$ covering all elements in $[n]$. Note that in order to cover elements in $X_i$, there must be some index $j$ such that $S_j \in \mH_i$. After relabeling the sets, we may assume that $S_i \in \mH_i$ for every $i$ $(1\le i \le k-1)$. Now, we consider $S_k$.

    Suppose first that $S_k\in\mH_t$ for some $t$ with $1\le t\le k-2$. For every
    $
        i\in\{1,\ldots,k-2\}\sm\{t\},
    $
    the set $S_i$ is the only set among $S_1,\ldots,S_k$ which intersects $X_i$. Hence, $S_i$ must contain all elements in $X_i$. By the definition of $\mH_i$, we have $S_i=X_i$.
    Similarly, $S_{k-1}$ is the only set among $S_1,\ldots,S_k$ which intersects $X_{k-1}$. Hence, it
    contains all of $X_{k-1}$, and so
    $
        S_{k-1}=X_{k-1}\cup B
    $
    for some $B\subseteq Y$ with $|B|\le k-4$. Now, we have that $S_t$ and $S_k$ both belong to $\mH_t$, so
    $
        (S_t\cup S_k)\cap Y\subseteq\{y_t\},
    $
    and all other sets $S_i$ with $1\le i\le k-2$ are equal to
    $X_i$. Therefore,
    \[
        \Big|
        \Big(\bigcup_{i=1}^kS_i\Big)\cap Y
        \Big|
        \le |B|+1
        \le k-3 < k-2 = |Y|,
    \]
    so we have a contradiction that some element in $Y$ is not covered.

    Suppose that $S_k \in \mH_{k-1}$. For every index $i$ with $1 \le i \le k-2$, we have that $S_i$ is the only set among $S_1,\ldots,S_k$ which intersects $X_i$, so $S_i=X_i$; none of them contains $z$. Also, neither
    $S_{k-1}$ nor $S_k$ contains $z$, since every member of $\mH_{k-1}$ is contained in $X_{k-1}\cup Y$. Thus, $z$ is not covered by $S_1, \ldots, S_k$, which is again a contradiction.
    \end{proof}

    \begin{claim} \label{cla::maximal}
        For any set $T \notin \bF$, there are sets $S_1, \ldots, S_{k-1} \in \bF$ such that
        \[
            T \cup \bigcup_{i=1}^{k-1} S_i = [n].
        \]
    \end{claim}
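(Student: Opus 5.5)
The plan is to construct, for each $T\notin\bF$, the sets $S_1,\ldots,S_{k-1}$ explicitly, by cases on which blocks $X_i$ the set $T$ meets. The guiding count is that $X_1,\ldots,X_{k-1}$ are disjoint and every member of $\bF$ meets at most one of them. So if $T$ meets no $X_i$, the $k-1$ sets must be forced to $S_i=X_i$ (or $X_{k-1}\cup B$). Each $X_j$ that $T$ meets frees some room: we can take a set $A\cup B$ with $A$ a proper subset of that block, and if $T$ contains a whole block, one $S$ becomes entirely free. Write $I=\{i\le k-2: T\cap X_i\neq\es\}$.

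\textbf{Case 1: $T\cap X_{k-1}\neq\es$.} Pick $x\in T\cap X_{k-1}$ and take $S_{k-1}=(X_{k-1}\sm\{x\})\cup Y\in\mH_{k-1}$, which covers $(X_{k-1}\sm T)\cup Y$. For $i\in I$, pick $x_i\in T\cap X_i$ and take $S_i=(X_i\sm\{x_i\})\cup\{y_i,z\}$. For $i\notin I$, take $S_i=X_i$. Everything is covered except possibly $z$, and $z$ is covered as soon as $I\neq\es$ or $z\in T$.

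In the remaining subcase $T\subseteq X_{k-1}\cup Y$. Since $T\notin\mH_{k-1}$, this forces $X_{k-1}\subseteq T$ and $|T\cap Y|\ge k-3$. Now no set is needed for $X_{k-1}$, so we spend the spare set on one block $j$ twice. If some $y_j\notin T$, choose that $j$; otherwise choose $j=1$. Take $X_j$ and $(X_j\sm\{x\})\cup\{y_j,z\}$ for some $x\in X_j$, and $X_i$ for the other $i\le k-2$. This covers $z$ and the only possibly missing $y_j$.

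\textbf{Case 2: $T\cap X_{k-1}=\es$.} Then $X_{k-1}$ must be covered by $S_{k-1}=X_{k-1}\cup B$ with $B\subseteq Y$ and $|B|\le k-4$. Choose $S_i$ for $i\le k-2$ as in Case 1. The uncovered elements are then $z$ (only if $I=\es$ and $z\notin T$) and the elements $y_i$ with $i\notin I$ and $y_i\notin T$. We want $B$ to absorb the latter, which needs at most $k-4$ of them.

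If $I=\es$, then $T\subseteq Y\cup\{z\}$. If $z\notin T$ we would have $T\in\mH_{k-1}$, so $z\in T$. Since $T\notin\mH_i$ for each $i$, $T$ contains at least two elements of $Y$, leaving at most $k-4$ uncovered $y$'s, which is fine. If $|I|\ge2$, then at most $k-4$ indices lie outside $I$, which is also fine.

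The main obstacle is $I=\{t\}$ with $y_i\notin T$ for all $i\neq t$, where $k-3$ elements of $Y$ would need covering. Here $T\subseteq X_t\cup\{y_t,z\}$, and $T\notin\mH_t$ forces $X_t\subseteq T$ with $T\neq X_t$. So the block $X_t$ needs no set, and we reallocate. Pick some $j\le k-2$ with $j\neq t$, which exists since $k\ge4$. Take $S_j=X_j$ and $S_t=(X_j\sm\{x\})\cup\{y_j,z\}$ for some $x\in X_j$; these cover $X_j$, $y_j$ and $z$. Take $S_i=X_i$ for the other $i\le k-2$. Finally take $S_{k-1}=X_{k-1}\cup B$ with $B=Y\sm\{y_t,y_j\}$, which has size $k-4$. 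Together with $y_t$ or $z$ coming from $T$ or from $S_t$, everything is covered.

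The key step throughout is to identify, in each boundary configuration, that $T\notin\bF$ forces $T$ to contain an entire block. That block frees a set, which can then cover $z$ or the missing element of $Y$.
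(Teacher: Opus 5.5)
Your direct case split on which blocks $T$ meets is a workable alternative to the paper's reduction to inclusion-minimal $T$ and its six types. Case~1 checks out, as do the subcases $I=\es$, $|I|\ge 2$, and $I=\{t\}$ with some $y_i\in T$, $i\neq t$. The gap is in the final ``main obstacle'' subcase, where $T=X_t\cup B'$ with $\es\neq B'\subseteq\{y_t,z\}$. Your sets there are $X_j$, $(X_j\sm\{x\})\cup\{y_j,z\}$, $X_i$ for $i\in[k-2]\sm\{t,j\}$, and $X_{k-1}\cup(Y\sm\{y_t,y_j\})$. None of them contains $y_t$: the set you call $S_t$ is a member of $\mH_j$, so the only element of $Y$ it supplies is $y_j$. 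Hence $y_t$ is covered only if $y_t\in T$. For $T=X_t\cup\{z\}$, which is indeed not in $\bF$, the union is $[n]\sm\{y_t\}$. For example, with $k=4$ and $T=X_1\cup\{z\}$, your sets are $X_2$, $(X_2\sm\{x\})\cup\{y_2,z\}$, $X_3$, and $y_1$ is missed.

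A better choice of $j$ does not help. Suppose $X_{k-1}$ is covered by a single set $X_{k-1}\cup B$ with $|B|\le k-4$, and the other $k-2$ sets come from $\mH_1,\ldots,\mH_{k-2}$. Then each of the $k-3$ blocks $X_i$ with $i\neq t$ is covered either by $X_i$ itself, which contains no element of $Y$, or by at least two members of $\mH_i$, which together contain at most $y_i$. With only $k-2$ sets available, these contribute at most one element of $Y$ in total. So when $T\cap Y=\es$, at most $(k-4)+1=k-3<|Y|$ elements of $Y$ are covered.

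The missing idea is to spend the set freed by $X_t\subseteq T$ inside $\mH_{k-1}$, splitting $X_{k-1}\cup Y$ into two members. For $T=X_t\cup\{z\}$, take $X_i$ for $i\in[k-2]\sm\{t\}$, together with $X_{k-1}$ and $Y$. Both lie in $\mH_{k-1}$: the first with $B=\es$ (using $k\ge 4$), the second with $A=\es\subsetneq X_{k-1}$. This is how the paper handles its type~(3) with $u=z$. Your construction agrees with the paper's for $u=y_t$, so using it when $y_t\in T$ and the split otherwise closes the subcase.
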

    \begin{proof}
    It suffices to prove the claim for inclusion-minimal sets
    $T\notin\bF$. We first classify all possible forms of such
    sets $T$. We claim that $T$ is of one of the following six
    types.
    \begin{enumerate}[(1)]
        \item $T=\{x_i,x_j\}$, where $x_i\in X_i$,
        $x_j\in X_j$, and $1\le i<j\le k-1$;

        \item $T=\{x_i,y_j\}$, where $x_i\in X_i$,
        $1\le i,j\le k-2$, and $i\ne j$;

        \item $T=X_i\cup\{u\}$, where $1\le i\le k-2$ and
        $u\in\{y_i,z\}$;

        \item $T=\{x_{k-1},z\}$ for some $x_{k-1}\in X_{k-1}$;

        \item $T=X_{k-1}\cup B$, where $B\subseteq Y$ and
        $|B|=k-3$;

        \item $T=\{y_i,y_j,z\}$, where
        $1\le i<j\le k-2$.
    \end{enumerate}

    To see this, suppose first that $T$ intersects two distinct sets
    $X_i$ and $X_j$ where $1 \le i<j \le k-1$. Choose $x_i\in T\cap X_i$ and $x_j\in T\cap X_j$. Since every member of $\bF$ intersects at
    most one of $X_1,\ldots,X_{k-1}$, we have $\{x_i,x_j\}\notin\bF$. 
    By the inclusion-minimality of $T$, we have $T=\{x_i,x_j\}$, so $T$ is of type~(1).

    We may now assume that $T$ intersects at most one of
    $X_1,\ldots,X_{k-1}$. Exactly one of the following three
    cases holds:
    \[
        T\cap X_i\ne\es
        \ \textrm{for some $i$ $(1\le i\le k-2)$},\qquad
        T\cap X_{k-1}\ne\es,\qquad\textrm{or}\qquad
        T\cap\Big(\bigcup_{i=1}^{k-1}X_i\Big)=\es.
    \]

    Suppose that $T\cap X_i\ne\es$ for some integer $i$ with $1\le i\le k-2$. If $y_j\in T$ for some integer $j$
    with $1\le j\le k-2$ and $j\ne i$, choose
    $x_i\in T\cap X_i$. Since
    $\{x_i,y_j\}\notin\bF$, by the inclusion-minimality of $T$,
    we have $T=\{x_i,y_j\}$, so $T$ is of type~(2).
    We may therefore assume that $y_j\notin T$ for every
    $j\ne i$. Since $T$ does not intersect any other $X_j$, we
    have $T\subseteq X_i\cup\{y_i,z\}$. Since
    $T\notin\mH_i$, we must have $X_i\subseteq T$ and
    $T\cap\{y_i,z\}\ne\es$. Choose
    $u\in T\cap\{y_i,z\}$. Since
    $X_i\cup\{u\}\notin\bF$, by the inclusion-minimality of $T$,
    we have $T=X_i\cup\{u\}$, so $T$ is of type~(3).

    Suppose next that $T\cap X_{k-1}\ne\es$. If $z\in T$,
    choose $x_{k-1}\in T\cap X_{k-1}$. Since
    $\{x_{k-1},z\}\notin\bF$, by the inclusion-minimality of
    $T$, we have $T=\{x_{k-1},z\}$, so $T$ is of type~(4).
    We may therefore assume that $z\notin T$. Since $T$ does
    not intersect any of $X_1,\ldots,X_{k-2}$, we have
    $T\subseteq X_{k-1}\cup Y$. Since
    $T\notin\mH_{k-1}$, we must have
    $X_{k-1}\subseteq T$ and $|T\cap Y|\ge k-3$. Choose a set
    $B\subseteq T\cap Y$ with $|B|=k-3$. Since
    $X_{k-1}\cup B\notin\bF$, by the inclusion-minimality of
    $T$, we have $T=X_{k-1}\cup B$, so $T$ is of type~(5).

    Suppose that
    $T\cap\big(\bigcup_{i=1}^{k-1}X_i\big)=\es$. Then,
    $T\subseteq Y\cup\{z\}$. We must have $z\in T$, since every
    subset of $Y$ belongs to $\mH_{k-1}$. Also, $T$ must
    contain at least two elements of $Y$. Indeed, otherwise
    $T\subseteq\{y_i,z\}$ for some integer $i$ with
    $1\le i\le k-2$, and hence $T\in\mH_i$.
    Choose integers $i,j$ with $1\le i<j\le k-2$ such that
    $y_i,y_j\in T$. Since
    $\{y_i,y_j,z\}\notin\bF$, by the inclusion-minimality of
    $T$, we have $T=\{y_i,y_j,z\}$, so $T$ is of type~(6).
    This proves the classification.
   
    We now consider the six types separately. We give the choice of $k-1$ sets from $\bF$ such that together with $T$, they cover all elements in $[n]$.

    \medskip
    \noindent\textbf{Type~(1).}
    Suppose first that $j\le k-2$. Consider the following
    $k-1$ sets:
    \[
        (X_i\sm\{x_i\})\cup\{y_i,z\},\quad
        (X_j\sm\{x_j\})\cup\{y_j\},\quad
        X_\ell\ (\ell\in[k-2]\sm\{i,j\}),\quad
        X_{k-1}\cup(Y\sm\{y_i,y_j\}).
    \]
    The first two sets belong to $\mH_i$ and $\mH_j$,
    respectively. Also, the last set belongs to
    $\mH_{k-1}$ since
    $
        |Y\sm\{y_i,y_j\}|=k-4.
    $
    Therefore, all these sets belong to $\bF$. 

    Suppose that $j=k-1$. Consider the following $k-1$ sets:
    \[
        (X_i\sm\{x_i\})\cup\{y_i,z\},\quad
        X_\ell\ (\ell\in[k-2]\sm\{i\}),\quad
        (X_{k-1}\sm\{x_j\})\cup Y.
    \]
    The last set belongs to $\mH_{k-1}$ since
    $X_{k-1}\sm\{x_j\}\subsetneq X_{k-1}$. Thus, all these
    sets belong to $\bF$.

    \medskip
    \noindent\textbf{Type~(2).}
    Consider the following $k-1$ sets:
    \[
        (X_i\sm\{x_i\})\cup\{y_i,z\},\quad
        X_\ell\ (\ell\in[k-2]\sm\{i\}),\quad
        X_{k-1}\cup(Y\sm\{y_i,y_j\}).
    \]
    The first set belongs to $\mH_i$, and the last set belongs
    to $\mH_{k-1}$ since
    $|Y\sm\{y_i,y_j\}|=k-4$.

    \medskip
    \noindent\textbf{Type~(3).}
    Suppose first that $u=y_i$. Choose an integer
    $j\in[k-2]\sm\{i\}$ and consider the following $k-1$ sets:
    \[
        X_\ell\ (\ell\in[k-2]\sm\{i\}),\quad
        \{y_j,z\},\quad
        X_{k-1}\cup(Y\sm\{y_i,y_j\}).
    \]
    We have $\{y_j,z\}\in\mH_j$, and the last set belongs to
    $\mH_{k-1}$ since $|Y\sm\{y_i,y_j\}|=k-4$.

    Suppose that $u=z$. Consider the following $k-1$ sets:
    \[
        X_\ell\ (\ell\in[k-2]\sm\{i\}),\quad
        X_{k-1},\quad Y.
    \]
    Note that $X_{k-1},Y\in\mH_{k-1}$. Thus, all these sets
    belong to $\bF$.

    \medskip
    \noindent\textbf{Type~(4).}
    Consider the following $k-1$ sets:
    \[
        X_1,\ldots,X_{k-2},\quad
        (X_{k-1}\sm\{x_{k-1}\})\cup Y.
    \]
    The last set belongs to $\mH_{k-1}$ since $X_{k-1}\sm\{x_{k-1}\}\subsetneq X_{k-1}$.

    \medskip
    \noindent\textbf{Type~(5).}
    Let $y_i$ be the unique element in $Y\sm B$. Consider the
    following $k-1$ sets:
    \[
        X_1,\ldots,X_{k-2},\quad \{y_i,z\}.
    \]
    Since $\{y_i,z\}\in\mH_i$, all these sets belong to $\bF$.

    \medskip
    \noindent\textbf{Type~(6).}
    Consider the following $k-1$ sets:
    \[
        X_1,\ldots,X_{k-2},\quad
        X_{k-1}\cup(Y\sm\{y_i,y_j\}).
    \]
    The last set belongs to $\mH_{k-1}$ since $|Y\sm\{y_i,y_j\}|=k-4$.
    Therefore, all these sets belong to $\bF$. 
    \qedhere

    \end{proof}

    Finally, by \cref{cla::kwiseInt,cla::maximal}, we have that $\F$ is maximal $k$-wise intersecting. By \cref{cla::sizebF} and the definition of $f_k(n)$, we have
    \[
        f_k(n) \le 4 \sum_{i=1}^{k-2} 2^{m_i} + 2^{k-2}2^{m_{k-1}} - 7(k-2). \qedhere
    \]
\end{proof}

Now, we prove \cref{thm::main}. 
\begin{proof}[Proof of \cref{thm::main}]
    Let
    \[
        m_i = \frac{n}{k-1} \,\,\textrm{for $1\le i\le k-4$}, \quad m_{k-3} = m_{k-2} = \frac{n}{k-1}-1, \quad \textrm{and} \quad m_{k-1} = \frac{n}{k-1}-k+3.
    \]
    By \cref{thm::mainGeneral}, we have
    \begin{align*}
        f_k(n) 
        &\le 4 \sum_{i=1}^{k-2} 2^{m_i} + 2^{k-2}2^{m_{k-1}}- 7(k-2) \\
        &= 4(k-4) 2^{\frac{n}{k-1}} + 4\cdot 2 \cdot 2^{\frac{n}{k-1}-1} + 2^{k-2} 2^{\frac{n}{k-1}-k+3} - 7(k-2) \\
        &= \Big(4(k-4) + 4\cdot2 \cdot \frac{1}{2} + 2^{k-2} \cdot 2^{-k+3} \Big) \cdot 2^{\frac{n}{k-1}} - 7(k-2)\\
        &= (4k-10)2^{\frac{n}{k-1}} - 7(k-2). \qedhere
    \end{align*}
\end{proof}

\section{Concluding remarks} \label{sec::concluding}
The main difference between the construction in
\cref{sec::Con} and that of
Janzer~\cite{janzer2022saturation} is a slight asymmetry:
the block $X_{k-1}$ and the family $\mH_{k-1}$ are treated
differently from the other blocks and families.

By \cref{equ::lowerboundfkn} and \cref{thm::main}, we have that when $k \ge 4$ is fixed and $n$ is divisible by $k-1$ and sufficiently large,
\[
    (1 + \Omega_k(1)) (k-1) 2^{\frac{n}{k-1}}
    \le
    f_k(n)
    \le
    (4k-10) 2^{\frac{n}{k-1}}.
\]
The main task is to narrow the gap between these two bounds. The construction in \cref{sec::Con} seems still to be artificial and appears unlikely to be optimal. Improving the lower bound could be a harder and more interesting task, which seems to require some genuinely new idea to characterize the structure of a small maximal $k$-wise intersecting family.

More broadly, $k$-wise intersecting families have also been
studied under various additional assumptions, including
uniformity and non-triviality~\cite{oneill2021nontrivial},
maximality in the uniform setting~\cite{zhang2023structure},
symmetry and regularity~\cite{ellis2017symmetric,
frankston2018regular}, and biased product
measures~\cite{tokushige2024measure}. 
The methods developed in these settings may also be useful for
understanding small maximal $k$-wise intersecting families and,
in particular, for improving the lower bound on $f_k(n)$.

\section*{Acknowledgments}
The construction was obtained through a discussion with ChatGPT 5.6.
All mathematical statements and proofs were independently verified
by the author, who takes full responsibility for the contents.


\begin{thebibliography}{9}

\bibitem{balogh2023maximal}
J.~Balogh, C.~Chen, K.~Hendrey, B.~Lund, H.~Luo, C.~Tompkins, and T.~Tran.
\newblock Maximal 3-wise intersecting families.
\newblock {\em Combinatorica}, 43(6):1045--1066, 2023.

\bibitem{ellis2017symmetric}
D.~Ellis and B.~Narayanan.
\newblock On symmetric {$3$}-wise intersecting families.
\newblock {\em Proc. Amer. Math. Soc.}, 145(7):2843--2847, 2017.

\bibitem{erdos1974extremal}
P.~Erd\H{o}s and D.~J. Kleitman.
\newblock Extremal problems among subsets of a set.
\newblock {\em Discrete Math.}, 8(3):281--294, 1974.

\bibitem{frankl2016invitation}
P.~Frankl and N.~Tokushige.
\newblock Invitation to intersection problems for finite sets.
\newblock {\em J. Combin. Theory Ser. A}, 144:157--211, 2016.

\bibitem{frankston2018regular}
K.~Frankston, J.~Kahn, and B.~Narayanan.
\newblock On regular {$3$}-wise intersecting families.
\newblock {\em Proc. Amer. Math. Soc.}, 146(10):4091--4097, 2018.

\bibitem{janzer2022saturation}
B.~Janzer.
\newblock A note on saturation for {$k$}-wise intersecting families.
\newblock {\em Combinatorial Theory}, 2(2):Paper No. 11, 5, 2022.

\bibitem{oneill2021nontrivial}
J.~O'Neill and J.~Verstra{\"e}te.
\newblock Non-trivial {$d$}-wise intersecting families.
\newblock {\em J. Combin. Theory Ser. A}, 178:Paper No. 105369, 12, 2021.

\bibitem{tokushige2024measure}
N.~Tokushige.
\newblock The maximum measure of non-trivial {$3$}-wise intersecting families.
\newblock {\em Math. Program.}, 204(1--2):643--676, 2024.

\bibitem{zhang2023structure}
M.~Zhang and T.~Feng.
\newblock The structure of maximal non-trivial {$d$}-wise intersecting uniform
  families with large sizes.
\newblock {\em Discrete Math.}, 346(10):Paper No. 113533, 2023.

\end{thebibliography}
\end{document}